\documentclass[12pt,a4paper]{article}        

\usepackage{amssymb}
\usepackage{amsmath}
\usepackage{indentfirst}
\usepackage{t1enc}
\usepackage{array}
\usepackage{graphpap}
\usepackage[mathscr]{eucal}
\usepackage{enumerate}
\usepackage{tabularx}
\usepackage{graphicx}
\usepackage{amsthm}
\usepackage{float}
\usepackage{xurl}
\usepackage{color}
\usepackage[normalem]{ulem}
\usepackage[T1]{fontenc}
\usepackage[english]{babel}

\usepackage[hyperfootnotes=false, hidelinks]{hyperref}

\newtheorem{theorem}{Theorem}
\newtheorem{definition}{Definition}
\newtheorem{lemma}{Lemma}
\newtheorem{problem}{Problem}
\newtheorem{corollary}{Corollary}

\newtheorem{prop}{Proposition}

\newcommand{\mc}{\mathcal}

\newcommand{\ab}[1]{\left\vert{#1}\right\vert}
\newcommand{\zj}[1]{\left({#1}\right)}

\renewcommand{\baselinestretch}{1.38}

\newcommand{\lb}[1]{\label{#1}}
\allowdisplaybreaks[1] 

\normalsize
\title{Translation-invariant equations in $\mathbb{F}_p^n$ and groups}

\author{Katalin Gyarmati}
\date{}

\begin{document}

\maketitle

\footnotetext{\noindent 2020 Mathematics Subject Classification: Primary: 11C08.\\
\indent Keywords and phrases: translation-invariant equations, additive combinatorics, slice rank method, finite groups.}

\begin{abstract}
In this paper, we explore how large a subset of a finite group can be without containing non-trivial solutions to linear equations of the form $aX+bY=cZ+dU$ (where $a+b=c+d$). These equations naturally generalize the classic concept of Sidon sets. Using Tao's slice rank method, we first establish strict upper bounds for the size of such solution-free sets in the vector space $\mathbb{F}_{p}^{n}$. Next, we turn to cyclic groups $\mathbb{Z}_{m}$ and construct surprisingly large sets that have no solutions for the asymmetric equation $X+5Y=3U+3Z$. These constructions achieve a logarithmic density of roughly $0.5283$, breaking the expected $0.5$ barrier for any sufficiently large modulus $m$. Finally, we show that this high-density behavior extends to a wider family of equations whose coefficients follow simple rules modulo $8$.
\end{abstract}

\section{Introduction}

The study of solution-free sets for linear equations is a central and active research area in additive combinatorics. The roots of this intriguing subject trace back to Roth's seminal 1953 theorem \cite{roth1953}, which states that any subset of natural numbers with positive upper density contains a three-term arithmetic progression (which essentially guarantees the presence of non-trivial solutions to the equation $X+Y=2Z$). Equally foundational is the study of Sidon sets, defined by the absence of non-trivial solutions to the symmetric equation $X+Y=Z+U$. The equations studied in this paper can be naturally viewed as broad, asymmetric generalizations of this classic Sidon property.

The behavior of a much broader class of such linear equations was studied by Imre Ruzsa in a 1993 paper \cite{ruzsa1993}. In this work, among other variations, he systematically investigated equations of the form $aX + bY = cU + dV$, where $a, b, c, d > 0$ are integers, and $a+b = c+d$. Let $r(N)$ denote the maximum cardinality of a subset $\mc A \subset \{1, \dots, N\}$ in which a given equation $aX+bY=cZ+dU$ has no non-trivial solutions. According to Theorem 7.3 in \cite{ruzsa1993}, if the product of the coefficients $abcd$ is not a square, then $r(N) \gg \sqrt{N}$, while if it is a square, then $r(N) \gg \sqrt{N} e^{-\beta \sqrt{\log N}}$ holds for an appropriate constant $\beta > 0$. 

In the proof of this theorem, Ruzsa relies heavily on modular arithmetic. In connection with this (based on personal communication), he raised the following two problems:

\begin{problem}\lb{prob:ruzsa1} 
  Let $2\le m \in \mathbb{Z}$, and consider the residue class ring $\mathbb{Z}_m$. Let $a,b,c,d$ be fixed elements of $\mathbb Z_m$, satisfying $a+b=c+d$. Let $s(m)$ denote the maximum size of a subset $\mc A \subseteq \mathbb{Z}_m$ in which the equation $aX + bY = cZ + dU$ (where $X,Y,Z,U\in \mc A$) has only the trivial solution $X=Y=Z=U$. Is it true that there exists a constant $\varepsilon > 0$ such that $s(m) \le m^{1-\varepsilon}$ holds for all sufficiently large $m$?
\end{problem}

\begin{problem}\lb{prob:ruzsa2}
Do there exist positive integers $a,b,c,d$ and a modulus $m$ for which, using the previous notation, $s(m) > \sqrt{m}$ holds?
\end{problem}

From the proof of Ruzsa's Theorem 7.3 \cite{ruzsa1993}, we know that $s(m)\gg \sqrt{m}$ if $abcd$ is not a square, and $s(m) \gg \sqrt{m} e^{-\beta \sqrt{\log m}}$ if $abcd$ is a square; where the implied constants also depend on $a,b,c,d$. In this paper, we answer Problem \ref{prob:ruzsa2} by proving that for the asymmetric equation $X+5Y=3U+3Z$, the maximum size of a solution-free set can actually break the expected $m^{0.5}$ barrier, reaching roughly $m^{0.5283}$ (the exact result is given in Theorem \ref{thm03}). We also show that this method naturally extends to a broader family of linear equations $aX+bY=cZ+dU$ with specific modulo 8 coefficients, yielding the same high density (see Theorem \ref{thm04}). While solving these problems completely in $\mathbb{Z}_m$ is still open, switching from $\mathbb{Z}_m$ to other finite structures like the vector space $\mathbb{F}_p^n$) yields sharp results. Recent breakthroughs by Croot--Lev--Pach and Ellenberg--Gijswijt \cite{croot2017, ellenberg2017} have revolutionized how we handle these combinatorial problems. One of the cleanest ways to use these methods is Tao's slice rank lemma \cite{tao2016symmetric}, which yields the following explicit upper bound:

\begin{theorem}\label{thm01}
Let $p$ be a prime, and let $a, b, c, d \in \mathbb{F}_p$ be elements such that $a+b=c+d$. If a subset $\mathcal{A} \subseteq \mathbb{F}_p^n$ is such that the equation $aX + bY = cZ + dU$ has no non-trivial solutions (i.e., any solution satisfies $X=Y=Z=U$), then for the cardinality of $\mathcal{A}$ we have
\[
|\mathcal{A}| \le 4\left(\frac{e(p+3)}{4}\right)^n.
\]
\end{theorem}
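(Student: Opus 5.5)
We apply Tao's slice rank lemma to a $4$-variable function on $\mathcal A^4$. Since $a+b=c+d$, the map $(x,y,z,u)\mapsto ax+by-cz-du$ vanishes on the diagonal. Define
\[
F(x,y,z,u)=\prod_{i=1}^n\Bigl(1-\bigl(ax_i+by_i-cz_i-du_i\bigr)^{p-1}\Bigr),\qquad x,y,z,u\in\mathcal A .
\]
By Fermat's little theorem, $F(x,y,z,u)=1$ exactly when $ax+by=cz+du$ in $\mathbb F_p^n$, and $F=0$ otherwise.

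\textbf{Lower bound.} The hypothesis says the only solutions inside $\mathcal A$ are $x=y=z=u$. So $F$ restricted to $\mathcal A^4$ is the diagonal tensor $\delta(x,y,z,u)$. Tao's lemma (for $k$-variable functions) gives that a diagonal tensor with nonzero diagonal entries has slice rank exactly $|\mathcal A|$.

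\textbf{Upper bound.} Expanding the product, $F$ is a linear combination of monomials $\prod_i x_i^{\alpha_i}y_i^{\beta_i}z_i^{\gamma_i}u_i^{\delta_i}$ with each exponent in $\{0,\dots,p-1\}$ and total degree at most $(p-1)n$. For each monomial, one of the four variable blocks has degree at most $(p-1)n/4$. We group the monomials according to the first such block, say $x$. The monomials in that group combine into a sum of terms $x^{\alpha}\,g_\alpha(y,z,u)$, one for each exponent vector $\alpha$ of degree at most $(p-1)n/4$, and each such term has slice rank $1$. Hence
\[
|\mathcal A|=\operatorname{slice\,rank}(F)\le 4M,
\]
where $M$ is the number of vectors $\alpha\in\{0,\dots,p-1\}^n$ with $\sum_i\alpha_i\le (p-1)n/4$.

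\textbf{Bounding $M$.} It remains to show $M\le \bigl(e(p+3)/4\bigr)^n$. This counting step is the main obstacle, since it must produce exactly the stated constant. The plan is to drop the upper bounds $\alpha_i\le p-1$ and count all nonnegative integer vectors of total degree at most $D=\lfloor (p-1)n/4\rfloor$:
\[
M\le\binom{D+n}{n}\le\Bigl(\frac{e(D+n)}{n}\Bigr)^n\le\Bigl(\frac{e\bigl((p-1)n/4+n\bigr)}{n}\Bigr)^n=\Bigl(\frac{e(p+3)}{4}\Bigr)^n,
\]
using $\binom{N}{n}\le (eN/n)^n$. This yields $|\mathcal A|\le 4\bigl(e(p+3)/4\bigr)^n$, which is exactly the claimed bound. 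So the crude stars-and-bars count, rather than a large-deviation estimate, appears to be what is intended. The care points are checking that the degree threshold $(p-1)n/4$ survives the rounding in $D$, and that the $4$ in front comes from the four choices of block.
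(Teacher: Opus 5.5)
Your proof is correct. The slice-rank half (the indicator polynomial $F$, Tao's lemma on $\mathcal A^4$, and the pigeonhole split giving $|\mathcal A|\le 4M$) is the same as the paper's. The difference is in how you bound $M$.

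The paper uses a generating-function (exponential-moment) argument. For every $0<x<1$ it shows $M\le x^{-K}(1+x+\dots+x^{p-1})^n$, which is Lemma \ref{lem01}. Only after that does it use $1-x^p<1$, which is exactly dropping the cap $\alpha_i\le p-1$. It then optimizes at $x=(p-1)/(p+3)$ and finishes with $(1+1/k)^k<e$, where $k=(p-1)/4$.

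You drop the cap immediately and use stars and bars, $M\le\binom{D+n}{n}\le\bigl(e(D+n)/n\bigr)^n$. Both routes discard the same information and pass to $e$ only in the last step, so they land on the same constant. Indeed, the paper's pre-final bound $\bigl(\tfrac{p+3}{4}\bigr)^n(1+1/k)^{kn}$ is just the entropy bound for $\binom{kn+n}{n}$. Your version is the shorter, purely combinatorial one.

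What the paper's route buys is the intermediate Lemma \ref{lem01}. It keeps the cap and is substantially sharper for small $p$, and Corollary \ref{cor01a} needs exactly this. For $p=5$ the bound of Theorem \ref{thm01} is $(2e)^n>5^n$, i.e.\ trivial. Even the exact cap-free count $\binom{2n}{n}\approx 4^n$ only gives exponent $\log 4/\log 5\approx 0.861$. The capped minimum $3.83444$ gives $0.83509$.

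So your remark that a large-deviation estimate is not what is intended is slightly off. The paper does use one; it just relaxes it at the end to reach the closed form.
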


If we apply Lemma \ref{lem01} (see later) to the prime $p=5$ and the group $G=\mathbb{F}_{5}^n$ (where the bound can be numerically minimized), we obtain the following asymptotic result:

\begin{corollary}\lb{cor01a}
  For every number $N \in \mathbb{N}$ there exists a finite abelian group $G$ with cardinality $|G| \ge N$ of exponent $5$, having the following property:
Suppose that $a, b, c, d \in \mathbb{Z}$ are fixed coefficients with $a+b=c+d$. Suppose that the equation
$$aX + bY = cZ + dU$$
has only the trivial solution in a set $\mc A \subseteq G$. Then the cardinality of the set satisfies
$$|\mc A| \le |G|^{0.83509}.$$
\end{corollary}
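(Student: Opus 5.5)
The plan is to take $G=\mathbb{F}_5^n$ with $n$ large and specialise the slice rank bound of Lemma~\ref{lem01} to $p=5$. Fix $N$ and integers $a,b,c,d$ with $a+b=c+d$. I reduce the coefficients modulo $5$ to get $\bar a,\bar b,\bar c,\bar d\in\mathbb{F}_5$. These still satisfy $\bar a+\bar b=\bar c+\bar d$. Since $G$ has exponent $5$, the equation $aX+bY=cZ+dU$ in $G$ is literally the same as $\bar aX+\bar bY=\bar cZ+\bar dU$. So a set $\mc A\subseteq G$ with only trivial solutions is exactly a set to which Lemma~\ref{lem01} (equivalently Theorem~\ref{thm01}) applies with these reduced coefficients. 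The bound will not depend on $a,b,c,d$, so one group $G$ works simultaneously for all coefficient choices, as the statement demands.

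The degenerate case is when all reduced coefficients vanish, or more generally when the tensor in the slice rank argument degenerates. In that case every quadruple is a solution, so $|\mc A|\le 1$ and the bound is trivial. I expect Lemma~\ref{lem01} to be stated for arbitrary $a,b,c,d\in\mathbb{F}_p$ with $a+b=c+d$. If it is not, I would dispatch this case directly.

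Next I evaluate the numerical constant. I expect Lemma~\ref{lem01} to have the standard Ellenberg--Gijswijt--Tao shape
\[
|\mc A|\le 4\,\Gamma_p^{\,n},\qquad \Gamma_p=\min_{0<x<1}\frac{1+x+\dots+x^{p-1}}{x^{(p-1)/4}},
\]
which comes from counting monomials of degree at most $n(p-1)/4$ in the four-variable slice rank decomposition. For $p=5$ the exponent $(p-1)/4$ equals $1$, so I minimise
\[
f(x)=x^{-1}+1+x+x^2+x^3 .
\]
The critical point solves $3x^2+2x+1=x^{-2}$. Numerically it lies near $x\approx 0.567$, giving $f(x)\approx 3.8345$. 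Then $\log 3.8345/\log 5\approx 0.83508$, which is just below $0.83509$.

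Finally I absorb the factor $4$. Because $\Gamma_5<5^{0.83509}$ strictly, we have
\[
4\,\Gamma_5^{\,n}\le 5^{0.83509\,n}=|G|^{0.83509}
\]
for all $n\ge n_0$. I then choose $n\ge n_0$ with $5^n\ge N$.

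The main obstacle is this numerical step. The margin between $\log_5\Gamma_5$ and $0.83509$ is tiny, so the minimum must be computed to enough precision to certify strict inequality. A clean way is to exhibit a specific rational $x_0$ with $\log_5 f(x_0)<0.83509$ and verify it by direct computation. Any value $f(x_0)$ is an upper bound for the minimum, so no exact minimisation is required.
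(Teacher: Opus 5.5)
Your proposal follows the paper's proof: take $G=\mathbb{F}_5^n$, apply Lemma~\ref{lem01} with $p=5$, bound the minimum by the value of $f$ at an explicit point, and absorb the factor $4$ for large $n$. Your explicit reduction of the integer coefficients modulo $5$ is a step the paper leaves implicit, and no separate degenerate case is needed. One numerical caution, though: your quoted value $f(x)\approx 3.8345$ is slightly too large to certify the bound. Since $5^{0.83509}\approx 3.834448$, it would give exponent $\approx 0.835098 > 0.83509$, whereas the true minimum is $\approx 3.834437$. The paper certifies the bound with $x_0=0.56774$ and $f(x_0)<3.83444$, which is exactly the explicit-point check you describe.
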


In Section 4, we return to the setting where $G = \mathbb{Z}_m$. We employ extensive computational searches to systematically study the behavior of extremal sets for $m \le 122$ with respect to the equation 
\[
2X + 2Y \equiv U + 3Z \pmod{m}.
\]
Following this empirical analysis, we turn our attention to another strongly asymmetric equation, $X + 5Y \equiv 3U + 3Z$, and provide explicit asymptotic constructions. These constructions demonstrate rigorously that for certain moduli, the maximum size of solution-free sets can exceed the anticipated bounds:

\begin{prop}\label{prop01}
Let the modulus be of the form $m = 4^n$ for a positive integer $n$. Then there exists a subset $\mathcal{A} \subseteq \mathbb{Z}_m$ of cardinality 
\[
|\mathcal{A}| = 2^n = m^{1/2},
\]
such that the equation $X + 5Y \equiv 3U + 3Z \pmod{m}$ has no non-trivial solutions.
\end{prop}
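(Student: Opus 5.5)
The plan is to take the explicit "base-$4$ Cantor-type" set
\[
\mathcal{A}_n=\Big\{\sum_{i=0}^{n-1}\varepsilon_i 4^i \ :\ \varepsilon_i\in\{0,1\}\Big\}\subseteq \mathbb{Z}_{4^n},
\]
and prove by induction on $n$ that $X+5Y\equiv 3U+3Z \pmod{4^n}$ has only trivial solutions in $\mathcal{A}_n$. Since the $2^n$ digit strings give distinct integers in $[0,4^n)$, we get $|\mathcal{A}_n|=2^n=m^{1/2}$ immediately.

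\textbf{Key step: reduction mod $4$.} Write each element as $x=\varepsilon+4x'$ with $\varepsilon\in\{0,1\}$ and $x'\in\mathcal{A}_{n-1}$ (viewed as an integer in $[0,4^{n-1})$). Take a solution $X,Y,U,Z$ with last digits $\varepsilon_1,\varepsilon_2,\varepsilon_3,\varepsilon_4$. Reducing the equation mod $4$ gives
\[
\varepsilon_1+5\varepsilon_2-3\varepsilon_3-3\varepsilon_4\equiv \varepsilon_1+\varepsilon_2+\varepsilon_3+\varepsilon_4\equiv 0 \pmod 4,
\]
because $5\equiv 1$ and $-3\equiv 1 \pmod 4$. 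As each $\varepsilon_j\in\{0,1\}$, the sum lies in $\{0,\dots,4\}$. Hence it equals $0$ or $4$, so all four last digits are equal to a common value $\varepsilon$. This is the base case $n=1$ (the set $\{0,1\}\subseteq\mathbb{Z}_4$), and it is the place where the particular coefficients matter: each of them is $\equiv 1 \pmod 4$.

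\textbf{Inductive step.} With equal last digits, the contribution of the digits is $\varepsilon(1+5-3-3)=0$ exactly, using $a+b=c+d$. The equation therefore becomes
\[
4\,(X'+5Y'-3U'-3Z')\equiv 0\pmod{4^n},
\]
that is, $X'+5Y'\equiv 3U'+3Z' \pmod{4^{n-1}}$ with $X',Y',U',Z'\in\mathcal{A}_{n-1}$. The induction hypothesis gives $X'=Y'=U'=Z'$, and together with the equal last digits this yields $X=Y=U=Z$.

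The only step needing care is the exact cancellation of the low digits. It uses both $a+b=c+d$ and the fact that all coefficients are $\equiv 1 \pmod 4$, which is what forces the four digits to coincide. Once this is checked, the induction is routine.
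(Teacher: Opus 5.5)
Your proposal is correct and follows essentially the same route as the paper. You use the same base-$4$ set with digits in $\{0,1\}$, and you force equal lowest digits by reducing mod $4$ to get a digit sum in $\{0,4\}$. You then use the same cancel-and-divide-by-$4$ induction, where the paper's subtraction of the common value $x_0+5y_0=3u_0+3z_0$ corresponds to your identity $\varepsilon(1+5-3-3)=0$.
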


As an interesting fact, we mention that if, instead of the group $\mathbb Z_{4^n}$, we try to construct a subset $\mathcal{A}$ in another group of order $m$ such that the equation $X+5Y=3U+3Z$ has only trivial solutions in it, then the cardinality of $\mathcal{A}$ can reach $m^{1/2+\varepsilon}$. More precisely:

\begin{theorem}\lb{thm02a}
  Let $m=8^{3n}$ and the group $G$ of order $m$ be $G := \mathbb Z_8^{3n}$. Then there exists a set $\mathcal{A} \subseteq G$ in which the equation $X+5Y=3U+3Z$ has only trivial solutions, and
\begin{equation}  
  |\mathcal{A}| \ge 0.2 \frac{3^{3n}}{n} \gg \frac{m^{\log 3/\log 8}}{\log m}
  \approx \frac{m^{0.5283\dots}}{\log m}.\lb{r03}
\end{equation}  
\end{theorem}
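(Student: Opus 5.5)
The plan is a product construction: choose a three-letter alphabet $S\subseteq\mathbb Z_8$, take all vectors in $S^{3n}\subseteq G$ with a fixed composition, and show by a counting argument that every solution is trivial.

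\textbf{Step 1: the one-coordinate problem.} Take $S=\{0,1,2\}\subseteq\mathbb Z_8$ and list all $(x,y,u,z)\in S^4$ with $x+5y\equiv 3u+3z \pmod 8$. As $y$ runs over $0,1,2$, the values of $x+5y \bmod 8$ are $\{0,1,2\}$, $\{5,6,7\}$ and $\{2,3,4\}$ respectively. The values of $3(u+z)\bmod 8$ are $0,3,6,1,4$ for $u+z=0,1,2,3,4$. Matching these gives the following solutions.
\begin{itemize}
\item The trivial ones $x=y=u=z$.
\item $(1,0,u,z)$ with $u+z=3$.
\item $(1,2,u,z)$ with $u+z=1$.
\item $(1,1,0,2)$ and $(1,1,2,0)$.
\end{itemize}
The value $2$ is never attained by $3(u+z)$, so it gives nothing. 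The structural fact we need is this: in every coordinate where the four entries are not all equal, we have $x=1$. Moreover, if in such a coordinate also $y=1$, then $\{u,z\}=\{0,2\}$, so $u\neq 1$.

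\textbf{Step 2: the set.} Let $\mathcal A$ be the set of vectors in $\{0,1,2\}^{3n}\subseteq\mathbb Z_8^{3n}$ having exactly $n$ coordinates equal to each of $0,1,2$. Then
\[
|\mathcal A|=\frac{(3n)!}{(n!)^3}.
\]
Stirling's formula gives $|\mathcal A|\sim \frac{\sqrt3}{2\pi n}3^{3n}$. Since $\sqrt3/(2\pi)\approx0.276$, a routine explicit Stirling estimate shows $|\mathcal A|\ge 0.2\cdot 3^{3n}/n$ for all $n\ge1$. This is the only computational part.

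\textbf{Step 3: ruling out non-trivial solutions.} Let $X,Y,U,Z\in\mathcal A$ satisfy the equation, and let $T$ be the set of coordinates where the four entries are not all equal.
\begin{enumerate}
\item By Step 1, $X$ equals $1$ on all of $T$.
\item Outside $T$, the vectors $X$ and $Y$ agree, and both contain exactly $n$ ones. Hence they have the same number of ones on $T$, namely $|T|$. So $Y\equiv1$ on $T$ as well.
\item Step 1 then forces $\{U_i,Z_i\}=\{0,2\}$ for every $i\in T$, so $U$ has no ones on $T$.
\item Comparing $U$ with $X$ in the same way as in item 2 gives $|T|=0$.
\end{enumerate}
Thus $X=Y=U=Z$, and the solution is trivial.

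The main obstacle is Step 1: finding an alphabet whose non-trivial coordinate solutions are rigid enough (a forced entry in $X$) that the fixed-composition trick closes the argument. Once $S=\{0,1,2\}$ is checked as above, the rest follows directly.
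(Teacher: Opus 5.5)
Your proof is correct and takes essentially the same approach as the paper. You use the same set of vectors in $\{0,1,2\}^{3n}$ with exactly $n$ zeros, ones and twos, the same nine-case classification of coordinate solutions modulo $8$, and a composition-counting argument. The only difference in that argument is bookkeeping: you count ones on the set $T$ of non-constant coordinates, while the paper counts zeros via the multiplicities $a,\dots,i$ of the nine quadruple types. For the size bound, the paper makes your ``routine explicit Stirling estimate'' concrete with Robbins' inequalities $\sqrt{2\pi n}(n/e)^n<n!<e\sqrt n(n/e)^n$, which give $|\mathcal A|>\frac{\sqrt{6\pi}}{e^3}\cdot\frac{3^{3n}}{n}>0.2\cdot\frac{3^{3n}}{n}$ for all $n\ge1$.
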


Theorem \ref{thm02a} proves that a large solution-free set can be constructed within $\mathbb{Z}_8^{3n}$. It is a natural question whether this high density just is due to this specific group, or can we get a similar bound in the standard cyclic group $\mathbb{Z}_m$ for arbitrarily large moduli. The following theorem provides an affirmative answer, showing that the asymptotic behavior can be extended to general $m$, keeping the exponent $\log 3/\log 8$ minus an arbitrarily small $\varepsilon > 0$.

\begin{theorem}\lb{thm03}
For every constant $\varepsilon > 0$, there exists an $m_0$ such that for all $m > m_0$, there exists a set $\mathcal{A} \subseteq \mathbb{Z}_m$ in which the equation
\[ X + 5Y = 3U + 3Z \]
has only the trivial solution ($X = Y = U = Z$), and
\[ \ab{\mc{A}} \ge m^{\log 3/ \log 8 - \varepsilon}\approx m^{0.5283\dots} . \]
\end{theorem}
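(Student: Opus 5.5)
The plan is to transfer the construction of Theorem \ref{thm02a} from $\mathbb{Z}_8^{3n}$ into $\mathbb{Z}_m$ by a digit embedding with carry-free padding, the standard Behrend/Ruzsa-type device. Let $\mathcal{B}\subseteq\mathbb{Z}_8^{k}$ (with $k=3n$) be the solution-free set of Theorem \ref{thm02a}, so $|\mathcal{B}|\ge 0.2\cdot 3^{k}/(k/3)$. We view $\mathbb{Z}_8^k$ as digit strings. We choose a base $M=2^{t}$ with $t$ a fixed large constant depending on $\varepsilon$, and map a vector $v=(v_1,\dots,v_k)$ with $v_i\in\{0,\dots,7\}$ to the integer $\phi(v)=\sum_i v_i M^{i-1}$. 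The point is that with $M$ large compared with the coefficients, we have $|x+5y-3u-3z|_{\text{digit}}\le 12\cdot 7<M$ in each digit. Hence an integer relation $\phi(x)+5\phi(y)=3\phi(u)+3\phi(z)$ (and also its version mod $m$, provided $m>12M^k$ so that no wraparound occurs) forces, digit by digit via the uniqueness of balanced base-$M$ representation, $x_i+5y_i=3u_i+3z_i$ over $\mathbb{Z}$. In particular the relation holds mod $8$ in every coordinate, so $x+5y=3u+3z$ in $\mathbb{Z}_8^k$. By Theorem \ref{thm02a} this gives $x=y=u=z$, and $\phi$ is injective. So $\mathcal{A}=\phi(\mathcal{B})\subseteq\mathbb{Z}_m$ is solution-free once $m\ge 12M^{k}$.

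Next comes the density count. Given a large $m$, we pick the largest $k$ (a multiple of $3$) with $12M^k\le m$, so $M^{k}\gg m/M^{3}$ and $k\approx \log m/(t\log 2)$. Then $|\mathcal{A}|\gg 3^{k}/k$. Writing $\log m\approx k t\log 2$, this gives only the exponent $\log 3/(t\log 2)$, which is bad when $t$ is large. So the naive padding wastes too much, and this is the main obstacle: we need a digit base close to $8$ while still killing carries.

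To overcome this, we group digits. We take blocks of $r$ consecutive $\mathbb{Z}_8$-coordinates and write each block as a base-$8$ integer in $[0,8^r)$, then separate blocks by a constant number $s$ of zero bits, for example base $M=8^r\cdot 2^{s}$ with $2^s>12$. Within a block, a base-$8$ integer relation would produce carries, so the integer identity no longer forces coordinatewise equations. However, reduction mod $8$ of the lowest base-$8$ digit does work, so the approach needs refinement. The cleaner alternative is to use that the solution-free property of Theorem \ref{thm02a} is presumably proved for $\mathbb{Z}_8^{3n}$ via a set of vectors with digits in a restricted alphabet (size $3$ per coordinate, suggesting digits from a $3$-element subset of $\mathbb{Z}_8$), possibly combined with a weight or sphere condition producing the $1/n$ loss. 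I would reopen that construction and check that the verification is really a statement about the $2$-adic digits, i.e.\ that it lifts to $\mathbb{Z}_{8^{k}}$ or to $\mathbb{Z}$ with bounded carries. Then I would embed the block $\{0,\dots,8^{r}-1\}$ directly, inserting the $s$ guard bits only between blocks. This gives $\log M=r\log 8+O(1)$, and the exponent becomes $\log 3/(\log 8+O(1)/r)\to\log3/\log8$ as $r\to\infty$. Choosing $r=r(\varepsilon)$, the polynomial-in-$k$ and constant losses are absorbed into $m^{-\varepsilon/2}$ for $m>m_0$. Finally, for arbitrary $m$ rather than ones of special form, the requirement is only $m\ge 12M^{k}$, and the gap between consecutive admissible values of $M^{k}$ is a factor $M=O_\varepsilon(1)$, which costs nothing in the exponent.
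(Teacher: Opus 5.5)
\textbf{There is a genuine gap in the step that is supposed to recover the exponent $\log 3/\log 8$.} Your first embedding (one $\mathbb{Z}_8$-coordinate per base-$M$ digit) is correct. As you note, though, it only gives $\log 3/\log M$. Your fix writes $r$ \emph{consecutive} coordinates of a single $v\in\mathcal B$ as one base-$8$ block, puts guard bits between blocks, and hopes the proof of Theorem~\ref{thm02a} ``lifts to $\mathbb{Z}_{8^k}$ or to $\mathbb{Z}$ with bounded carries''. It does not.

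Already for $n=1$, the base-$8$ integers whose three digits are a permutation of $0,1,2$ contain the nontrivial solution
\[
X=17=(1,2,0),\quad Y=80=(0,2,1),\quad U=10=(2,1,0),\quad Z=129=(1,0,2)
\]
(digits listed from least significant), since $17+5\cdot 80=417=3(10+129)$. The carries are $-1,+1,0$. The digit quadruples at positions $1$ and $2$, namely $(2,2,1,0)$ and $(0,1,0,2)$, are not among the nine quadruples allowed by the coordinatewise congruence mod $8$. With a carry $c_i\in\{-1,0,1\}$ one only gets $x_i+5y_i-3u_i-3z_i\equiv -c_i \pmod 8$, so the counting argument of Theorem~\ref{thm02a} no longer applies.

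Put these three digits in the first block and let the remaining $3n-3$ coordinates agree in all four vectors (with $n-1$ of each digit). This gives a nontrivial solution in your set for every $n$ and every block length $r\ge 3$. So the construction fails exactly in the regime $r\to\infty$ that you need, and in $\mathbb{Z}_{8^k}$ as well as in $\mathbb{Z}$.

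\textbf{The missing idea, which is how the paper proceeds, is to transpose the layout.} Take $t-1$ independent copies of the configuration of Theorem~\ref{thm02a}. Put copy $k$ at offset $k$ in each of the $3n$ blocks of $t$ base-$8$ digits, with the top digit of every block except the last equal to $0$; this is the set $A_{t,n}$.
\begin{itemize}
\item Digitwise $x+5y\le 12$ and $3u+3z\le 12$, so carries are at most $1$ and die in the zero digit.
\item Hence the offset-$0$ digits of all blocks satisfy $x+5y\equiv 3u+3z \pmod 8$ with no incoming carry.
\item These digits form a complete vector with $n$ zeros, $n$ ones and $n$ twos, so Theorem~\ref{thm02a} forces $x=y=u=z$ there.
\item Such a column then contributes exactly $6x-6x=0$ to $X+5Y-3U-3Z$. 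It can therefore be deleted without disturbing the other digits, and one inducts on $t$; this is Lemma~\ref{lem03}.
\end{itemize}
This is how your remark that ``reduction mod 8 of the lowest base-8 digit does work'' becomes usable: the lowest digits of all blocks must by themselves constitute an element of the Theorem~\ref{thm02a} set.

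The resulting set has $\left((3n)!/(n!)^3\right)^{t-1}$ elements, all below $8^{3tn-1}<m/6$. From there your final bookkeeping goes through as in the paper: no wraparound, and the polynomial losses and the gap between admissible moduli are absorbed into $m^{-\varepsilon}$ by choosing $t=t(\varepsilon)$.
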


The method used to prove Theorem \ref{thm03} naturally extends to a wider class of linear equations. This is because our base-8 construction depends only on the modulo 8 remainders of the coefficients, not on their exact values. Since the proof of the following, more general theorem is very similar to that of Theorem \ref{thm03}, we will only sketch it in the final section.

\begin{theorem}\lb{thm04}
For every constant $\varepsilon > 0$, and for any integers $a, b, c, d$ satisfying $a \equiv 1 \pmod 8$, $b \equiv 5 \pmod 8$, $c \equiv d \equiv 3 \pmod 8$, and $a+b=c+d$, there exists an $m_0$ such that for all $m > m_0$, there exists a set $\mathcal{A} \subseteq \mathbb{Z}_m$ in which the equation
\[ aX + bY = cZ + dU \]
has only the trivial solution ($X = Y = U = Z$), and
\[ \ab{\mc{A}} \ge m^{\log 3/ \log 8 - \varepsilon}\approx m^{0.5283\dots} . \]
\end{theorem}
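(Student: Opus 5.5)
The plan is to reduce Theorem \ref{thm04} to the same base-$8$ digit construction used for Theorem \ref{thm03}. The mechanism is that the constraints in that construction only see the coefficients modulo $8$, apart from a bounded carry effect. Concretely, I recall the structure of the Theorem \ref{thm03} argument. Take a block set $\mathcal{B}\subseteq \mathbb{Z}_8^{3}$, or more generally a set of digit blocks of some fixed length $k$. It is chosen so that $X+5Y=3U+3Z$ has only trivial solutions digitwise, as in Theorem \ref{thm02a}. Then $\mathcal{A}\subseteq \mathbb{Z}_m$ consists of integers whose base-$8$ expansion is a concatenation of such blocks, separated by padding zero digits. The padding prevents carries between blocks, and $m$ is taken at least $8^{L}$ where $L$ is the total digit length.

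For general $a,b,c,d$ I would proceed as follows. Let $K=\max(|a|,|b|,|c|,|d|)$. First, choose the padding length $t$ between consecutive blocks to be $\lceil \log_8(4K)\rceil+1$, a constant depending only on the coefficients. Then each block of $aX+bY-cZ-dU$ computed in $\mathbb{Z}$ is determined by the corresponding blocks of $X,Y,Z,U$ together with a bounded carry, and no interaction occurs across padded gaps. Second, since $m>8^{L}\cdot 4K$, the congruence $aX+bY\equiv cZ+dU \pmod m$ forces equality in $\mathbb{Z}$: the difference has absolute value below $4K\cdot 8^{L}\le m$. Third, I read the integer identity block by block from the least significant end. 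At the lowest block, the identity reduces modulo $8$ on the lowest digit to $X_0+5Y_0\equiv 3Z_0+3U_0 \pmod 8$, using $a\equiv1$, $b\equiv5$ and $c\equiv d\equiv3 \pmod 8$. I would then argue inductively, exactly as in Theorem \ref{thm03}, that the lowest differing block yields a nontrivial digitwise solution in $\mathcal{B}$, which is a contradiction. Here the key is to choose the lowest position where $(X,Y,Z,U)$ are not all equal. Below it, all contributions cancel exactly because $a+b=c+d$, so no carry enters that position, and there only the residues of the coefficients modulo $8$ matter.

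Finally, I count. With $r$ blocks of length $k$ and padding $t$, we get $|\mathcal{A}|=|\mathcal{B}|^r$ and $m\approx 8^{r(k+t)}$. Taking $k$ large with $|\mathcal{B}|\ge 3^{k}/\mathrm{poly}(k)$ as in Theorem \ref{thm02a}, the exponent is $\frac{k\log 3-O(\log k)}{(k+t)\log 8}$, which exceeds $\log3/\log8-\varepsilon$ once $k$ is large relative to $t$ and $\varepsilon$. For arbitrary large $m$ I would take the largest $r$ with $8^{r(k+t)}\cdot 4K<m$. The loss is a factor of at most $8^{k+t}\cdot 4K$, a constant, so it is absorbed into $\varepsilon$ for $m>m_0$.

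I expect the main obstacle to be the carry analysis in the third step. With coefficients larger than $8$, a digit-level product such as $aX_i$ spills into several higher digits. One must check that the argument of Theorem \ref{thm03} really only uses the lowest nontrivial digit, where carries vanish by the equality $a+b=c+d$ applied to the agreeing lower part, together with the no-cross-block-carry property. If Theorem \ref{thm03} instead uses all digits within a block, I would use the padding to make each block's constraint independent, and rely only on the lowest-digit reduction modulo $8$.
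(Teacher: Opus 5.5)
There is a genuine gap, and it is in the construction itself. You put each balanced block (a vector in $\{0,1,2\}^k$ with $k/3$ of each digit, as in Theorem~\ref{thm02a}) into $k$ \emph{consecutive} base-$8$ digits, and you pad only between blocks. The padding does decouple the blocks, but carries still propagate inside a block. So the step ``the lowest differing block yields a nontrivial digitwise solution in $\mathcal{B}$'' does not follow. Your lowest-differing-position argument gives a carry-free congruence at one digit only, and a single congruence is no contradiction: $x+5y\equiv 3u+3z \pmod 8$ has nontrivial solutions with digits in $\{0,1,2\}$ (e.g.\ $x=1$, $y=0$, $\{u,z\}=\{1,2\}$). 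The counting argument of Theorem~\ref{thm02a} needs the congruence in all $3n$ coordinates simultaneously. In fact your set is not solution-free, even for $(a,b,c,d)=(1,5,3,3)$. Take the block elements $X=021_{(8)}=17$, $Y=120_{(8)}=80$, $U=012_{(8)}=10$, $Z=201_{(8)}=129$. They satisfy $17+5\cdot 80=417=3\cdot 10+3\cdot 129$. Prepend a common balanced string of length $3n-3$ and let all other blocks coincide. This gives a nontrivial solution in your $\mathcal{A}$ for every $k=3n$.

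The missing idea is the paper's interleaved, or ``transposed'', layout. An element of $A_{t,n}$ has $3n$ super-blocks of $t$ digits each. The top $r$ digits of each super-block are $0$, where $8^r\ge a+b$ (so $r=1$ for Theorem~\ref{thm03}). A balanced vector is spread \emph{across} the super-blocks: for a fixed active offset $k$, the digits at positions $k,t+k,\dots,(3n-1)t+k$ form one balanced vector in $\mathbb{Z}_8^{3n}$. Then the offset-$0$ digits of \emph{all} super-blocks are carry-free at the same time, because the zero run just below each one absorbs any incoming carry. So the full coordinatewise congruence holds in $\mathbb{Z}_8^{3n}$, and the count of Theorem~\ref{thm02a} forces these digits to coincide. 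Their contributions then cancel because $a+b=c+d$, they can be deleted, and the induction on $t$ (Lemma~\ref{lem03}) continues with $A_{t-1,n}$. Only $r$ of every $t$ digits are wasted, so the exponent is $\frac{(t-r)\log 3}{t\log 8}\to\frac{\log 3}{\log 8}$.

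Your fallback does not repair the gap. Padding between blocks already makes the block equations independent; the problem is inside a block. Making every coordinate of a consecutively placed vector carry-free would require a zero after every active digit, which caps the exponent at $\frac{\log 3}{2\log 8}\approx 0.264$. The rest of your plan is sound and transfers directly to the interleaved construction: lifting the congruence mod $m$ to an equality in $\mathbb{Z}$, the carry bound fixing the padding length, reducing the coefficients mod $8$, and interpolating in $m$.
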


\vspace{0.5cm}
\noindent \textbf{Note added.} Shortly before the submission of this manuscript, a preprint by Hametner and Solymosi (arXiv:2609.13972) appeared, which independently investigates large solution-free sets for four-variable asymmetric linear equations. While their work also breaks the expected $0.5$ exponent barrier, their focus and methodology are entirely distinct from ours. They employ the method of combinatorial degenerations and finite certificates to study the equation $3x+y=2z+2w$, whereas the present paper utilizes Tao's slice rank method and explicit base-8 constructions to establish bounds for the equation $X+5Y=3U+3Z$.
\vspace{0.5cm}

\section{Proof of Theorem \ref{thm01}.}

Tensors provide a natural algebraic framework for studying multivariate polynomials. In a combinatorial context, a tensor can be viewed as a multidimensional matrix or a multivariate function.

\begin{definition}[Tensor]
Let $A$ be a finite set, $k \ge 2$ an integer, and $\mathbb{F}_p$ a finite field. By a $k$-ary tensor we mean a function $T: A^k \to \mathbb{F}_p$ defined on the $k$-fold Cartesian product of the set. 
\end{definition}

The concept of matrix rank can be generalized to higher-dimensional tensors in several ways. For us, the most important is the slice rank, which excellently reflects combinatorial properties.

\begin{definition}[Slice Rank]
The slice rank (denoted by $\operatorname{s-rank}(T)$) of a tensor $T: A^k \to \mathbb{F}_p$ is the smallest integer $R$ for which $T$ can be written as a sum of terms of the following form:
\begin{equation*}
T(x_1, \ldots, x_k) = \sum_{i=1}^R f_i(x_{j_i}) \cdot g_i(x_1, \ldots, \widehat{x}_{j_i}, \ldots, x_k)
\end{equation*}
where $j_i \in \{1, \ldots, k\}$ is an index, $f_i: A \to \mathbb{F}_p$ is a single-variable function, and $g_i: A^{k-1} \to \mathbb{F}_p$ is an arbitrary function depending on the remaining $k-1$ variables (the notation $\widehat{x}_{j_i}$ means that the variable $x_{j_i}$ is omitted from the arguments).
\end{definition}

The efficacy of the slice rank method relies on the fact that the slice rank of a diagonal tensor is exactly equal to the size of its support set.

\begin{lemma}[Tao's slice rank lemma \cite{tao2016symmetric}, 2016]
Let $A$ be a finite set, and $T: A^k \to \mathbb{F}_p$ a diagonal tensor for which $T(x_1, \ldots, x_k) \neq 0$ holds if and only if $x_1 = x_2 = \ldots = x_k$. Then the slice rank of the tensor is exactly equal to the cardinality of the set:
\begin{equation*}
\operatorname{s-rank}(T) = |A|
\end{equation*}
\end{lemma}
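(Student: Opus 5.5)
We prove Tao's slice rank lemma by induction on $k$ after a linear-algebra reduction. Write $T(x_1,\dots,x_k)=\sum_{a\in A}c_a\,\delta_a(x_1)\cdots\delta_a(x_k)$ with all $c_a\neq 0$. The upper bound $\operatorname{s-rank}(T)\le |A|$ is immediate: each term $c_a\delta_a(x_1)\cdot\bigl(\delta_a(x_2)\cdots\delta_a(x_k)\bigr)$ is a single slice.

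For the lower bound, suppose to the contrary that
\[
T=\sum_{j=1}^k\sum_{i\in I_j} f_i(x_j)\,g_i(x_1,\ldots,\widehat{x}_j,\ldots,x_k),
\]
where $R=\sum_j|I_j|<|A|$ and the slices are grouped by the distinguished variable $j$. The base case $k=2$ is the usual fact that a diagonal matrix with nonzero diagonal has rank $|A|$, since a sum of $R$ slices has matrix rank at most $R$.

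For the inductive step, let $V_k=\operatorname{span}\{f_i:i\in I_k\}$, a subspace of the function space $\mathbb{F}_p^A$ of dimension at most $|I_k|$. Its annihilator is therefore large: there is a function $h:A\to\mathbb{F}_p$ with $\sum_a h(a)f(a)=0$ for all $f\in V_k$. We choose such an $h$ with support $S=\{a:h(a)\neq0\}$ of size $|S|\ge |A|-|I_k|$. To see that this is possible, note that the space $W=V_k^{\perp}$ has dimension at least $|A|-|I_k|$. Row reducing a basis of $W$ gives a set $P$ of $\dim W$ pivot coordinates at which the restriction map $W\to\mathbb{F}_p^P$ is bijective. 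So there is a vector of $W$ that is nonzero at every coordinate of $P$; for instance, take the preimage of the all-ones vector.

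Now contract $x_k$ against $h$. Define
\[
T'(x_1,\ldots,x_{k-1})=\sum_{x_k}h(x_k)\,T(x_1,\ldots,x_k)=\sum_{a\in S}c_a h(a)\,\delta_a(x_1)\cdots\delta_a(x_{k-1}).
\]
We track what happens to each slice under this contraction.
\begin{itemize}
\item Slices with $j=k$ vanish, because $\sum_{x_k} h(x_k)f_i(x_k)=0$.
\item Slices with $j<k$ remain slices in the variable $x_j$: the factor $f_i(x_j)$ is unchanged, and $g_i$ contracted in $x_k$ is a function of the remaining $k-2$ variables.
\end{itemize}
Hence $\operatorname{s-rank}(T')\le R-|I_k|$. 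We then restrict $T'$ to $S^{k-1}$; restricting each slice to $S^{k-1}$ keeps it a slice, so the bound persists. The restricted tensor is diagonal on $S$ with nonzero entries $c_ah(a)$, so the induction hypothesis gives
\[
|A|-|I_k|\le |S|\le R-|I_k|,
\]
that is, $|A|\le R$, a contradiction.

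For $k=2$ the same contraction argument also works directly: it reduces $T$ to a vector that is nonzero on $S\neq\emptyset$ but is written using zero slices.

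The main obstacle is producing a single $h$ in the annihilator whose support is as large as the annihilator's dimension; this is the pivot-coordinate argument above. Everything else is bookkeeping to check that contraction and restriction do not increase slice rank.
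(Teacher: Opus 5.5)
Your proof is correct. The paper gives no proof of this lemma; it only quotes it from Tao. Your argument is essentially Tao's original one: group the slices by distinguished variable, pick $h\in V_k^{\perp}$ whose support has size at least $\dim V_k^{\perp}\ge |A|-|I_k|$ (your pivot-coordinate argument does this correctly), contract in $x_k$ so that exactly the $I_k$-slices die, then restrict to $S^{k-1}$ and apply the $(k-1)$ case. The bookkeeping $|A|-|I_k|\le |S|\le R-|I_k|$ is right, and the base case via matrix rank is sound.

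The closing remark about $k=2$ is false and should be deleted. With $h$ only orthogonal to the $f_i$, $i\in I_2$, contracting $x_2$ does not give zero slices. The slices with $j=1$ survive as $f_i(x_1)\sum_{x_2}h(x_2)g_i(x_2)$. A one-variable function has slice rank at most $1$ under the natural convention, not $|S|$, so no contradiction follows and the induction cannot be started at $k=1$. A direct contraction proof for $k=2$ needs $h$ orthogonal also to the $g_i$, $i\in I_1$. Such a nonzero $h$ exists because all these functions span at most $R<|A|$ dimensions, and that is just the matrix-rank argument again. Since your actual base case is the matrix-rank fact, the proof itself is unaffected.
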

Related to this lemma, see also \cite{tao2016symmetric, tao2016notes, blasiak2017capsets}.

Suppose that in the set $\mc A \subseteq \mathbb{F}_p^n$ the equation $aX + bY - cZ - dU = \mathbf{0}$ has only trivial ($X=Y=Z=U$) solutions. Define the following $4$-variable indicator polynomial:
\begin{equation*}
F(X,Y,Z,U) = \prod_{i=1}^n \left( 1 - (ax_i + by_i - cz_i - du_i)^{p-1} \right),
\end{equation*}
where $X=(x_1,x_2,\dots,x_n)$,  $Y=(y_1,y_2,\dots,y_n)$,  $U=(u_1,u_2,\dots,u_n)$ and  $Z=(z_1,z_2,\dots,z_n)$. 
Restricted to the set $\mc A \times \mc A \times \mc A \times \mc A$, this function $F$ is a diagonal tensor, so by Tao's theorem $\operatorname{s-rank}(F) = |\mc A|$.

\bigskip
On the other hand, $F$ is a polynomial whose total degree is at most $n(p-1)$. Each monomial of the expanded product is of the form $f_1(X)f_2(Y)f_3(Z)f_4(U)$, where $f_1, f_2, f_3, f_4$ are monomials themselves. Since the sum of the degrees of these four factors is at most $n(p-1)$, by the pigeonhole principle, the degree of at least one of the four factors ($f_1(X),f_2(Y),f_3(Z),f_4(U)$) cannot exceed a quarter of the total degree, i.e., the value $\frac{p-1}{4}n$.

\bigskip
Accordingly, the polynomial $F$ can be decomposed into a sum of four polynomials in each of which the degree of one selected factor ($f_1(X),f_2(Y),f_3(Z)$ or $f_4(U)$) is at most $\frac{p-1}{4}n$. By the definition of the slice rank, this implies that $\operatorname{s-rank}(F) \le 4M$, where $M$ is the number of $n$-variable monomials in which the total degree does not exceed $\frac{p-1}{4}n$, and the maximum degree of each coordinate is $p-1$. Comparing the two rank relations yields the upper bound:
\begin{equation*}
  |\mc{A}| \le 4M. %\lb{fofo}
\end{equation*}

\bigskip
Our goal is to give an upper bound for the number $M$ of $n$-variable monomials in which the exponent of each variable is at most $p-1$, and the sum of the exponents (the total degree) does not exceed $\frac{p-1}{4}n$. For this, we call upon an elementary combinatorial and analytical tool: generating functions.

\bigskip
Consider the following single-variable polynomial:
\begin{equation*}
P(x) = (1 + x + x^2 + \dots + x^{p-1})^n.
\end{equation*}
If we expand this expression, the coefficient of the term $x^k$ -- let us denote it by $c_k$ -- gives exactly the number of ways we can choose $n$ integer exponents, each at most $p-1$, such that their sum is exactly $k$. 

The number of monomials $M$ we are interested in is the sum of these coefficients from $k=0$ up to $K = \lfloor \frac{p-1}{4}n \rfloor$:
\begin{equation*}
M = \sum_{k=0}^K c_k.
\end{equation*}

To estimate this sum, we apply a simple algebraic trick. Let $x$ be an arbitrary real number in the interval $(0, 1)$. Since $x < 1$, for any exponent $k$ such that $k \le K$, the inequality $k - K \le 0$ holds, which implies that:
\begin{equation*}
x^{k-K} \ge 1.
\end{equation*}

Using this fact, we can multiply each coefficient $c_k$ by the value $x^{k-K}$, which is greater than or equal to $1$, so the sum can only increase. We further increase the sum if we also add the terms with indices greater than $K$ (since the coefficients are non-negative and $x > 0$):
\begin{equation*}
M = \sum_{k=0}^K c_k \le \sum_{k=0}^K c_k x^{k-K} \le \sum_{k=0}^{n(p-1)} c_k x^{k-K}.
\end{equation*}

We can factor out the term $x^{-K}$ from the expression on the right side, and the remaining sum is exactly the original form of the polynomial $P(x)$:
\begin{equation*}
M \le x^{-K} \sum_{k=0}^{n(p-1)} c_k x^k = x^{-K} P(x).
\end{equation*}

Substituting $K = \frac{p-1}{4}n$ and the definition of $P(x)$:
\begin{align*}
  M &\le x^{-(p-1)n/4} (1 + x + x^2 + \dots + x^{p-1})^n\\
  &= \left( x^{-(p-1)/4} (1 + x + x^2 + \dots + x^{p-1}) \right)^n.
\end{align*}

Thus, we have established the following lemma:
\begin{lemma}\lb{lem01}
Using the notation and definitions of Theorem \ref{thm01}, the following inequality holds for the set $\mc A$:
\begin{align*}
|\mc A| \le 4\zj{\min_{0<x<1} x^{-(p-1)/4} (1+x+\dots+x^{p-1})}^n.%\lb{korhoz}
\end{align*}
\end{lemma}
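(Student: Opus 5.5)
The bound follows by assembling the chain of inequalities derived just before the statement and then optimizing over the free parameter. The plan has three steps.

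First, the set $\mathcal{A}$ only has trivial solutions, so the indicator polynomial $F$ restricted to $\mathcal{A}^4$ is diagonal. We need its diagonal entries to be nonzero: when $X=Y=Z=U$, each factor is $1-((a+b-c-d)x_i)^{p-1}=1$, because $a+b=c+d$. Off the diagonal, some coordinate of $aX+bY-cZ-dU$ is nonzero, and Fermat's little theorem makes the corresponding factor vanish. Tao's slice rank lemma therefore gives $\operatorname{s-rank}(F)=|\mathcal{A}|$.

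Second, we bound the slice rank from above. Each variable's exponents can be reduced to at most $p-1$, since $t^p=t$ on $\mathbb{F}_p$. We expand $F$ into monomials $f_1(X)f_2(Y)f_3(Z)f_4(U)$ of total degree at most $n(p-1)$. We assign each monomial to a variable whose factor has degree at most $\frac{p-1}{4}n$. Grouping by that low-degree factor writes $F$ as a sum of at most $4M$ slices, where $M$ is the number of reduced monomials in $n$ variables of degree at most $K=\lfloor \frac{p-1}{4}n\rfloor$. Hence $|\mathcal{A}|\le 4M$.

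Third, we bound $M$ by the generating function argument. Writing $M=\sum_{k\le K}c_k$, where $c_k$ are the coefficients of $(1+x+\dots+x^{p-1})^n$, we get for every $0<x<1$
\[
M\le x^{-K}(1+\dots+x^{p-1})^n\le \bigl(x^{-(p-1)/4}(1+\dots+x^{p-1})\bigr)^n .
\]
The second inequality holds because $K\le\frac{p-1}{4}n$ and $x^{-1}>1$, so replacing $K$ by the non-integer value only increases the bound. Since the inequality holds for every $x$ in $(0,1)$, it holds for the infimum. The function $x\mapsto x^{-(p-1)/4}(1+\dots+x^{p-1})$ tends to $+\infty$ as $x\to0^+$ and is continuous on $(0,1]$. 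Its value at $x=1$ is $p$, and the infimum over $(0,1)$ is approached but never beaten at that endpoint. So the infimum over $(0,1)$ is either attained at an interior point or equals the limit $p$. In the latter case, stating the bound with "min" should be read as "inf"; this is harmless, since the bound with $p$ is trivial anyway.

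The only real care point is the slice rank decomposition in the second step. One must make sure each monomial is assigned to exactly one slice, for example to the first factor of low degree. Each slice is then indexed by a low-degree monomial in one variable set, with an arbitrary function of the other three, so there are at most $M$ slices per variable.
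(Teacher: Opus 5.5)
Your proposal is correct and follows the paper's argument step for step: the diagonal indicator polynomial with Tao's lemma, then pigeonhole on degrees giving $|\mathcal A|\le 4M$, then the generating-function bound $M\le x^{-K}P(x)$. Your explicit treatment of $K=\lfloor\frac{p-1}{4}n\rfloor$ and of the nonvanishing diagonal (via $a+b=c+d$) is slightly more careful than the paper's text. Your min-versus-inf hedge is unnecessary: for $f(x)=x^{-(p-1)/4}(1+\dots+x^{p-1})$ one has $f'(1)=p\cdot\frac{p-1}{4}>0$, so $f<p$ just to the left of $1$, and since $f\to\infty$ as $x\to 0^+$, the minimum is attained in the interior of $(0,1)$.
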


Let the base of the exponentiation be the function $f(x)$:
\begin{equation*}
f(x) = x^{-(p-1)/4} (1 + x + x^2 + \dots + x^{p-1}).
\end{equation*}
Our goal is to show that $f(x) < \frac{e(p+3)}{4}$ holds for some $0 < x < 1$, which will complete the proof of the theorem.

\bigskip
Rewriting the function using the formula for the sum of a geometric progression:
\begin{equation}
  f(x) = x^{-(p-1)/4} \frac{1-x^p}{1-x}<\frac{1}{x^{(p-1)/4} (1-x)}.
\lb{11}
\end{equation}  

Our goal is to make this upper bound as small as possible, which requires maximizing the denominator. For ease of writing, let us introduce the notation $k = \frac{p-1}{4}$. We are looking for the maximum of the expression $g(x) = x^k (1-x)$ on the interval $(0,1)$.

\bigskip
Differentiating using the product rule:
\begin{align*}
  g'(x) &= k x^{k-1} (1-x) - x^k\\
        &= x^{k-1} (k - x(k+1)).
\end{align*}        
The maximum is where the derivative is zero, that is:
\begin{align*}
k - x(k+1) &= 0\\
  x &= \frac{k}{k+1}.
\end{align*}   
Substituting back the value $k = \frac{p-1}{4}$ to obtain that the function $\frac{1}{x^{\frac{p-1}{4}} (1-x)}$ takes its minimum at the point
$$x = \frac{\frac{p-1}{4}}{\frac{p-1}{4} + 1} = \frac{p-1}{p+3}.$$
Clearly, $0<x<1$.

\bigskip
Inserting this optimal value $x = \frac{p-1}{p+3}$ into our upper bound \eqref{11}, we obtain:
\begin{align*}
f\left(\frac{p-1}{p+3}\right) &< \frac{p+3}{4} \left(\frac{p+3}{p-1}\right)^k\\
                              &=\frac{p+3}{4} \left(1 + \frac{1}{k}\right)^k.
\end{align*}                              
Here
$$\left(1 + \frac{1}{k}\right)^k < e.$$
By this, we obtain the final, closed-form, strict upper bound:
$$f\left(\frac{p-1}{p+3}\right) < \frac{e}{4} (p+3),$$ which completes the
proof of the theorem. \qed

\section{Proof of Corollary \ref{cor01a}.}

To prove the statement of the Corollary, we apply Lemma \ref{lem01} for the prime number $p = 5$. Fix an arbitrary number $N \in \mathbb{N}$. Let the base group be of the form $G = \mathbb{F}_{5}^n$ (which, considered as an additive group, is isomorphic to the group $\mathbb{Z}_{5}^n$), where the dimension $n$ is chosen large enough so that $|G| = 5^n \ge N$ is satisfied.

\bigskip
According to Lemma \ref{lem01}, the following upper bound holds for the cardinality of the solution-free set $\mc A \subseteq \mathbb{F}_{5}^n$:
$$|\mc A| \le 4\zj{\min_{0<x<1} x^{-(5-1)/4} (1+x+\dots+x^{5-1})}^n.$$
This minimum is not larger than the value of the function
$x^{-(5-1)/4} (1+x+\dots+x^{5-1})$ at the point $0.56774$, which
is less than $3.83444$, thus
\begin{align*}
  |\mc A| &< 4 \cdot \left(3.83444\right)^n
  = 4 \cdot \left(5^n\right)^{\log (3.83444/\log (5)}\\
  &< 4 \cdot |G|^{0.835089}< |G|^{0.83509},
\end{align*}
provided that the cardinality $|G|$ of the group is large enough.
This completes the proof of the corollary. \qed

\section{Computer search with the condition $0, 1 \in \mc{A}$}

In this section, using computers, we searched for examples for $m\le 122$ of maximum size sets $\mc A\subset \mathbb Z_m$ that contain the elements $0$ and $1$, and in which the equation $2X+2Y\equiv U+3Z\pmod{m}$ has only trivial solutions. 

\bigskip
The columns of the table provide the modulus $m$, the maximum set size ($|\mc{A}|$), the logarithmic density parameter ($\frac{\log |\mc{A}|}{\log m}$), and a specific extremal set $\mc{A}$, respectively. For the cases $m=2, 3, 4$, no such set exists (i.e., there is no subset containing both $0$ and $1$ that avoids non-trivial solutions); thus, no values are provided for these rows. To save space, the table presents the results divided into three columns, and the elements of the sets are listed without spaces.

\begin{table}[H]
\centering
\renewcommand{\baselinestretch}{1.1}
\setlength{\tabcolsep}{3pt}
\scriptsize
\begin{tabular}{|c c c c | c c c c | c c c c|}
\hline
$m$ & $|\mc{A}|$ & $\frac{\log |\mc{A}|}{\log m}$ & $\mc{A}$ & $m$ & $|\mc{A}|$ & $\frac{\log |\mc{A}|}{\log m}$ & $\mc{A}$ & $m$ & $|\mc{A}|$ & $\frac{\log |\mc{A}|}{\log m}$ & $\mc{A}$ \\
\hline
2 & - & - & - & 43 & 4 & 0.3686 & \{0,1,8,9\} & 84 & 6 & 0.4044 & \{0,1,5,30,34,35\} \\
3 & - & - & - & 44 & 4 & 0.3663 & \{0,1,7,38\} & 85 & 6 & 0.4033 & \{0,1,11,12,27,70\} \\
4 & - & - & - & 45 & 5 & 0.4228 & \{0,1,11,18,20\} & 86 & 6 & 0.4023 & \{0,1,6,13,67,79\} \\
5 & 2 & 0.4307 & \{0,1\} & 46 & 4 & 0.3621 & \{0,1,7,8\} & 87 & 6 & 0.4012 & \{0,1,5,6,26,67\} \\
6 & 2 & 0.3869 & \{0,1\} & 47 & 4 & 0.3601 & \{0,1,5,11\} & 88 & 6 & 0.4002 & \{0,1,5,11,34,42\} \\
7 & 2 & 0.3562 & \{0,1\} & 48 & 4 & 0.3581 & \{0,1,6,7\} & 89 & 6 & 0.3992 & \{0,1,7,18,22,62\} \\
9 & 2 & 0.3155 & \{0,1\} & 50 & 5 & 0.4114 & \{0,1,9,11,29\} & 91 & 7 & 0.4314 & \{0,1,21,27,73,78,80\} \\
10 & 2 & 0.3010 & \{0,1\} & 51 & 4 & 0.3526 & \{0,1,5,6\} & 92 & 6 & 0.3962 & \{0,1,11,13,18,84\} \\
11 & 2 & 0.2891 & \{0,1\} & 52 & 5 & 0.4073 & \{0,1,6,15,44\} & 93 & 6 & 0.3953 & \{0,1,8,9,19,83\} \\
12 & 2 & 0.2789 & \{0,1\} & 53 & 5 & 0.4054 & \{0,1,7,8,17\} & 94 & 6 & 0.3944 & \{0,1,6,7,15,86\} \\
13 & 2 & 0.2699 & \{0,1\} & 54 & 5 & 0.4035 & \{0,1,5,11,42\} & 95 & 7 & 0.4273 & \{0,1,25,34,53,57,82\} \\
14 & 2 & 0.2627 & \{0,1\} & 55 & 5 & 0.4016 & \{0,1,6,7,15\} & 96 & 6 & 0.3926 & \{0,1,9,10,16,90\} \\
15 & 2 & 0.2559 & \{0,1\} & 56 & 5 & 0.3998 & \{0,1,5,26,50\} & 97 & 6 & 0.3917 & \{0,1,5,6,23,62\} \\
16 & 2 & 0.2500 & \{0,1\} & 57 & 4 & 0.3429 & \{0,1,7,8\} & 98 & 6 & 0.3908 & \{0,1,11,14,18,90\} \\
17 & 2 & 0.2447 & \{0,1\} & 58 & 5 & 0.3964 & \{0,1,21,24,52\} & 99 & 6 & 0.3899 & \{0,1,8,14,65,69\} \\
18 & 2 & 0.2398 & \{0,1\} & 59 & 5 & 0.3947 & \{0,1,9,11,53\} & 100 & 6 & 0.3891 & \{0,1,5,14,18,19\} \\
19 & 3 & 0.3731 & \{0,1,8\} & 60 & 5 & 0.3930 & \{0,1,5,11,54\} & 101 & 7 & 0.4216 & \{0,1,12,27,30,43,49\} \\
20 & 2 & 0.2314 & \{0,1\} & 61 & 5 & 0.3915 & \{0,1,6,10,50\} & 102 & 6 & 0.3874 & \{0,1,10,11,18,95\} \\
21 & 3 & 0.3609 & \{0,1,9\} & 62 & 5 & 0.3899 & \{0,1,5,14,44\} & 103 & 7 & 0.4199 & \{0,1,5,21,24,46,56\} \\
22 & 3 & 0.3554 & \{0,1,9\} & 63 & 6 & 0.4325 & \{0,1,7,12,15,40\} & 104 & 8 & 0.4477 & \{0,1,5,14,18,19,39,84\} \\
23 & 3 & 0.3504 & \{0,1,7\} & 64 & 5 & 0.3869 & \{0,1,5,24,26\} & 105 & 7 & 0.4181 & \{0,1,6,7,15,67,83\} \\
24 & 3 & 0.3456 & \{0,1,10\} & 65 & 6 & 0.4292 & \{0,1,8,10,56,58\} & 106 & 6 & 0.3842 & \{0,1,5,14,19,75\} \\
25 & 4 & 0.4307 & \{0,1,10,11\} & 66 & 5 & 0.3841 & \{0,1,6,7,58\} & 107 & 7 & 0.4164 & \{0,1,11,28,29,61,66\} \\
26 & 4 & 0.4255 & \{0,1,5,6\} & 67 & 5 & 0.3828 & \{0,1,6,13,15\} & 108 & 6 & 0.3827 & \{0,1,7,18,23,32\} \\
27 & 4 & 0.4206 & \{0,1,5,6\} & 68 & 5 & 0.3814 & \{0,1,6,7,15\} & 109 & 7 & 0.4148 & \{0,1,5,6,23,47,85\} \\
28 & 4 & 0.4160 & \{0,1,11,12\} & 69 & 6 & 0.4232 & \{0,1,25,26,42,53\} & 110 & 7 & 0.4140 & \{0,1,6,20,23,25,71\} \\
29 & 4 & 0.4117 & \{0,1,6,7\} & 70 & 6 & 0.4217 & \{0,1,10,12,33,51\} & 111 & 7 & 0.4132 & \{0,1,7,32,39,51,67\} \\
30 & 4 & 0.4075 & \{0,1,5,26\} & 71 & 6 & 0.4203 & \{0,1,6,27,32,33\} & 112 & 7 & 0.4124 & \{0,1,6,33,39,43,54\} \\
31 & 4 & 0.4037 & \{0,1,14,18\} & 72 & 6 & 0.4189 & \{0,1,6,7,15,64\} & 113 & 7 & 0.4116 & \{0,1,5,6,35,92,96\} \\
32 & 4 & 0.4000 & \{0,1,5,6\} & 73 & 6 & 0.4176 & \{0,1,11,48,58,59\} & 114 & 7 & 0.4109 & \{0,1,8,10,23,26,87\} \\
33 & 4 & 0.3965 & \{0,1,5,6\} & 74 & 6 & 0.4163 & \{0,1,21,22,45,51\} & 115 & 8 & 0.4382 & \{0,1,8,20,23,66,100,106\} \\
34 & 4 & 0.3931 & \{0,1,7,8\} & 75 & 6 & 0.4150 & \{0,1,6,7,15,67\} & 116 & 7 & 0.4093 & \{0,1,5,6,28,47,49\} \\
35 & 4 & 0.3899 & \{0,1,6,7\} & 76 & 6 & 0.4137 & \{0,1,15,41,55,56\} & 117 & 7 & 0.4086 & \{0,1,6,7,15,50,95\} \\
36 & 4 & 0.3869 & \{0,1,6,7\} & 77 & 6 & 0.4125 & \{0,1,9,10,16,71\} & 118 & 7 & 0.4079 & \{0,1,24,26,29,62,99\} \\
37 & 4 & 0.3839 & \{0,1,8,9\} & 78 & 6 & 0.4113 & \{0,1,7,12,47,69\} & 119 & 8 & 0.4351 & \{0,1,6,25,42,92,102,104\} \\
38 & 4 & 0.3811 & \{0,1,5,6\} & 79 & 6 & 0.4101 & \{0,1,6,7,17,69\} & 120 & 8 & 0.4344 & \{0,1,5,14,22,75,96,111\} \\
39 & 4 & 0.3784 & \{0,1,7,8\} & 80 & 6 & 0.4089 & \{0,1,8,10,23,71\} & 121 & 7 & 0.4058 & \{0,1,5,16,18,37,97\} \\
40 & 4 & 0.3758 & \{0,1,5,16\} & 81 & 6 & 0.4077 & \{0,1,5,6,23,64\} & 122 & 7 & 0.4051 & \{0,1,8,14,23,58,117\} \\
41 & 4 & 0.3733 & \{0,1,6,10\} & 82 & 6 & 0.4066 & \{0,1,6,7,15,74\} & & & & \\
42 & 4 & 0.3709 & \{0,1,5,6\} & 83 & 6 & 0.4055 & \{0,1,7,12,15,50\} & & & & \\
\hline
\end{tabular}
\end{table}

Based on the computational data presented in the table, we can draw the following conclusions regarding the extremal sets for $m \le 122$:

\begin{itemize}
    \item[] For the tested values, the logarithmic density $\frac{\log |\mc{A}|}{\log m}$ attains its maximum at $m = 104$, taking the value of $0.4477$.

    \item[] Across the whole table, this density remains below $0.5$. This might seem to contrast with Ruzsa's Theorem 7.3 in \cite{ruzsa1993} (where $s(m)\ge c_0m^{0.5}$ for some constant $c_0<1$). However, as we will show later, for another strongly asymmetric equation, $X+5Y\equiv 3U+3Z\pmod{m}$, the density can actually exceed $0.5$ for sufficiently large moduli.

    \item[] The maximum size of the solution-free set, $|\mc{A}|$, grows very slowly as the modulus $m$ increases. For $m \le 122$, the largest size we found is just $|\mc{A}| = 8$, which appears at $m = 104, 115, 119$, and $120$.

    \item[] For very small moduli ($m = 2, 3, 4$), there are no valid sets containing both $0$ and $1$, meaning that the algebraic structure is initially too restrictive to avoid non-trivial solutions.
\end{itemize}

\section{Proof of Proposition \ref{prop01}.}

To prove this, we construct the set $\mc{A}$ as follows: consider the integers $0 \le X < 4^n$ whose base-four representation contains exclusively the digits $0$ and $1$. That is, $X \in \mc{A}$ holds if and only if
$$X = \sum_{i=0}^{n-1} x_i 4^i, \quad \text{where } x_i \in \{0, 1\} \text{ for all } 0 \le i < n.$$
Then
$$|\mc{A}| = 2^n = (4^n)^{1/2} = m^{0.5}.$$
We need to show that the equation $X + 5Y \equiv 3U + 3Z \pmod{4^n}$ has only the trivial solution $X = Y = U = Z$ on the set $\mc{A}$. Suppose that $X, Y, U, Z \in \mc{A}$ satisfy the congruence. Let the base-four representations of the four numbers be $X = \sum x_i 4^i$, $Y = \sum y_i 4^i$, $U = \sum u_i 4^i$, and $Z = \sum z_i 4^i$, respectively. Then
$$\sum_{i=0}^{n-1} x_i 4^{i} + 5 \sum_{i=0}^{n-1} y_i 4^{i} \equiv 3 \sum_{i=0}^{n-1} u_i 4^{i} + 3 \sum_{i=0}^{n-1} z_i 4^{i} \pmod{4^{n}}.$$
Reducing the congruence modulo 4, we obtain:
\begin{align*}
x_0 + 5y_0 &\equiv 3u_0 + 3z_0 \pmod 4\\
  x_0 + y_0 + u_0 + z_0  &\equiv 0 \pmod 4.
\end{align*}                         
Since by construction $x_0, y_0, u_0, z_0 \in \{0, 1\}$, the sum of the four digits is an integer that falls into the closed interval $[0, 4]$. In this interval, only $0$ and $4$ are divisible by four. If the sum is $0$, then $x_0 = y_0 = u_0 = z_0 = 0$. If the sum is $4$, then $x_0 = y_0 = u_0 = z_0 = 1$.

Subtracting the common value $x_0+5y_0=3u_0+3z_0$ from the original congruence, and then dividing by $4$, we deduce the congruence for the following:
$$\sum_{i=1}^{n-1} x_i 4^{i-1} + 5 \sum_{i=1}^{n-1} y_i 4^{i-1} \equiv 3 \sum_{i=1}^{n-1} u_i 4^{i-1} + 3 \sum_{i=1}^{n-1} z_i 4^{i-1} \pmod{4^{n-1}}.$$

We can repeat the modulo $4$ examination for this congruence in a completely analogous way for the digits $x_1, y_1, u_1, z_1$. Proceeding by mathematical induction, it follows that $x_i = y_i = u_i = z_i$ holds for every index $i \in \{0, 1, \dots, n-1\}$. Since all digits of the four numbers are identical, we conclude that $X = Y = U = Z$. \qed

\section{Proof of Theorem \ref{thm02a}.}
Let $\mathcal{A} \subseteq \mathbb{Z}_8^{3n}$ be the set of all vectors containing exactly $n$ zeros, $n$ ones, and $n$ twos as coordinates. Then we have
\[
|\mathcal{A}| = \frac{(3n)!}{(n!)^3}.
\]
Based on the paper of Robbins
\cite{Robbins}, for every integer $n \ge 1$ we have:
$$\sqrt{2\pi n}\left(\frac{n}{e}\right)^n \cdot e^{\frac{1}{12n+1}} < n! < \sqrt{2\pi n}\left(\frac{n}{e}\right)^n \cdot e^{\frac{1}{12n}}.$$
Since $1 < e^{\frac{1}{12n+1}}$ and $\sqrt{2\pi} e^{1/12} < e$, thus
$$\sqrt{2\pi n}\left(\frac{n}{e}\right)^n < n! < e\sqrt{n}\left(\frac{n}{e}\right)^n.$$
Then
\begin{equation}\lb{r401} |\mathcal{A}| = \frac{(3n)!}{n!n!n!} > \frac{\sqrt{2\pi(3n)}\left(\frac{3n}{e}\right)^{3n}}{e^3 (\sqrt{n})^3 \left(\frac{n}{e}\right)^{3n}} = \frac{\sqrt{6\pi}}{e^3} \cdot \frac{3^{3n}}{n} > 0.2 \cdot \frac{3^{3n}}{n}. \end{equation}

With this, we have proved \eqref{r03}. We only need to prove that the equation $X+5Y=3U+3Z$ has only trivial solutions in $\mathcal{A}$.

For this, assume that $X+5Y=3U+3Z$. Let $X=(x_1, x_2, \dots, x_{3n})$, $Y=(y_1, y_2, \dots, y_{3n})$, $Z=(z_1, z_2, \dots, z_{3n})$, and $U=(u_1, u_2, \dots, u_{3n})$. From the assumption, for every coordinate:
$$x_i + 5y_i \equiv 3u_i + 3z_i \pmod 8.$$
Since $x_i, y_i, u_i, z_i \in \{0, 1, 2\}$, this can only be satisfied if
\begin{align*}
  (x_i, y_i, z_i, u_i) \in \{ &(0,0,0,0), (1,0,1,2), (1,0,2,1), (1,1,0,2), (1,1,2,0),\\
  &(1,1,1,1), (1,2,0,1), (1,2,1,0), (2,2,2,2)\}.
\end{align*}  
In the first column of the table below, we listed these 9 quadruples, and in the second column, we denoted with a letter of the English alphabet how many coordinates $1 \le i \le 3n$ there are for which $x_i, y_i, u_i, z_i$ match the given quadruple.

\begin{center}
\begin{tabular}{cccc|c}
$x_i$, & $y_i$,& $u_i$,& $z_i$ & how many $i$? \\
\hline
$0,$ & $0,$ & $0,$ & $0$ & $a$ \\
$1,$ & $0,$ & $1,$ & $2$ & $b$ \\
$1,$ & $0,$ & $2,$ & $1$ & $c$ \\
$1,$ & $1,$ & $0,$ & $2$ & $d$ \\
$1,$ & $1,$ & $2,$ & $0$ & $e$ \\
$1,$ & $1,$ & $1,$ & $1$ & $f$ \\
$1,$ & $2,$ & $0,$ & $1$ & $g$ \\
$1,$ & $2,$ & $1,$ & $0$ & $h$ \\
$2,$ & $2,$ & $2,$ & $2$ & $i$
\end{tabular}
\end{center}
Since there are $n$ zeros in $X, Y, Z, U$:
\begin{equation*}
  a = n, \quad a+b+c=n, \quad a+d+g=n, \quad a+e+h=n.
\end{equation*}
Thus
$$b+c=0, \quad d+g=0, \quad e+h=0.$$
Since $b, c, d,g,h$ and $e$ are non-negative integers, we have:
$$b=c=d=e=g=h=0.$$
In $X$ there are exactly $n$ twos, yielding
$$i=n.$$
Finally, there are exactly $n$ ones in $Y$, so $d+e+f=n$. But $d=e=0$, so $f=n$.

In summary: $a=f=i=n$ and $b=c=d=e=g=h=0$.
This, however, implies that $(x_i, y_i, z_i, u_i) \in \{(0,0,0,0), (1,1,1,1), (2,2,2,2)\}$ for each index $i$. Consequently, the corresponding coordinates of $X, Y, Z$, and $U$ are identical, yielding $X=Y=Z=U$. This completes the proof of the theorem. \qed

\section{Proof of Theorem \ref{thm03}.}

For the proof, let's define a special set $A_{t,n}$ using base-$8$ representations. A number $a_{3tn-2}a_{3tn-3}\dots a_2a_1a_{0\ (8)}$ belongs to $A_{t,n}$ if and only if $a_{t-1}=a_{2t-1}=a_{3t-1}=\dots=a_{(3n-1)t-1}=0$, and all other digits are $0$, $1$, or $2$. Furthermore, each of the digit sets $\{a_0, a_t, \dots, a_{(3n-1)t}\}$, $\{a_1, a_{t+1}, \dots, a_{(3n-1)t+1}\}$, $\{a_2, a_{t+2}, \dots, a_{(3n-1)t+2}\}$, \dots, $\{a_{t-2}, a_{2t-2}, \dots, a_{3nt-2}\}$ contains exactly $n$ ones, $n$ twos, and $n$ zeros.

\bigskip First, we prove the following lemma:
\begin{lemma}\lb{lem03}
If $t \ge 2$ and $n \ge 1$ are integers, then the equation $X+5Y=3U+3Z$ holds in $A_{t,n}$ if and only if $X=Y=U=Z$.
\end{lemma}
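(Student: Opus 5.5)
The plan is to mimic the proofs of Proposition \ref{prop01} and Theorem \ref{thm02a}: read off base-$8$ digits from the lowest position upward, keep track of carries, and then finish with the same counting argument as in Theorem \ref{thm02a}. Let $X=\sum x_i8^i$, $Y=\sum y_i8^i$, $U=\sum u_i8^i$, $Z=\sum z_i8^i$ be elements of $A_{t,n}$. Put $d_i=x_i+5y_i-3u_i-3z_i$. Since all digits lie in $\{0,1,2\}$, we have $-12\le d_i\le 12$. I will first treat the equation as an equation of integers, $\sum_i d_i 8^i=0$. Working modulo the relevant power of $8$ only adds a possible carry out of the top digit, which I expect to dispose of at the end by the same argument used at a buffer digit.

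\textbf{Step 1 (carries).} Define carries by $c_0=0$ and $d_i+c_i=8c_{i+1}$. Induction on $i$, using $|d_i|\le 12$, gives $|c_i|\le 1$. The role of the forced zero digits $a_{kt-1}=0$ is to act as buffers. At a buffer position $i=kt-1$ we have $d_i=0$, so $c_i\equiv 0\pmod 8$, hence $c_i=0$, and then also $c_{i+1}=0$. Thus every block of $t-1$ consecutive free digits starts and ends with carry $0$, and the blocks decouple completely.

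\textbf{Step 2 (inside one block).} Next I enumerate all quadruples $(x,y,u,z)\in\{0,1,2\}^4$ together with the carry transitions $c\to c'$, $c,c'\in\{-1,0,1\}$, that they permit. For incoming carry $0$ this gives exactly the nine quadruples listed in the proof of Theorem \ref{thm02a}. Among these, the non-constant ones have $d=\pm 8$, for example $(1,0,1,2)$ gives $d=-8$, and so they can create a carry. I would then show that a nonzero carry can never return to $0$ before the block ends. The first idea is a weighting argument: attach to each transition a weight depending only on which digits of $X,Y,U,Z$ occur, chosen so that the total weight over a block telescopes to zero. Failing that, I would try to show directly that the only cyclic carry paths starting and ending at $0$ use constant quadruples.

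\textbf{Step 3 (global counting).} If Step 2 yields only the weaker conclusion that each position has one of the nine carry-free quadruples, I would finish with the counting argument of Theorem \ref{thm02a}. For each offset $r\in\{0,\dots,t-2\}$, the digits of each of $X,Y,U,Z$ in the class $\{a_r,a_{t+r},\dots\}$ contain exactly $n$ zeros, $n$ ones and $n$ twos. Let $a,\dots,i$ count the nine quadruple types within one class, as in Theorem \ref{thm02a}. Then the equations $a=n$, $a+b+c=n$, and so on force every non-constant type to occur zero times, which gives $X=Y=U=Z$.

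\textbf{Main obstacle.} I expect Step 2 to be the main obstacle, namely excluding carry chains such as a $-8$ position followed by positions with $d\equiv 1\pmod 8$ that eventually bring the carry back to $0$. If a local argument fails, the fallback is to combine the carry bookkeeping with the balance conditions across all blocks at the same offset. For instance, summing the relations $d_i+c_i=8c_{i+1}$ over a block gives $\sum d_i=7\sum c_i$, and this can be combined with the counts of each digit value.
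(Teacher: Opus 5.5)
Your Step 1 is correct: $|c_i|\le 1$, and the carries vanish at the buffer digits. Your Step 3 counting is also the right finishing tool. But Step 2 as planned cannot work, because the local statement it aims at is false.

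Inside a single block, put $(x,y,u,z)=(1,0,1,2)$ at one position and $(1,0,0,0)$ at the next. Then the two values of $d$ are $-8$ and $1$, the carry goes $0\to -1\to 0$, and $-8+8\cdot 1=0$. So a block of length $t-1\ge 2$ can carry a nonzero value that returns to $0$ before the block ends, using non-constant quadruples. The same example refutes your alternative claim about cyclic carry paths. Incidentally, $(1,1,0,2)$ and $(1,1,2,0)$ are admissible, non-constant, and have $d=0$, so not all non-constant admissible quadruples have $d=\pm 8$.

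No weighting or path argument confined to one block can exclude such chains. The digit-count constraints live on offset classes spread across all $3n$ blocks, not inside a block. Consequently Step 3, which presupposes zero incoming carry at every position, is justified only at offset $0$. The fallback via $\sum d_i=7\sum c_i$ does not identify how the counts and the carries interact.

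The missing idea is to let the counting argument itself kill the carries, one offset at a time, from the bottom up.

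\begin{enumerate}
\item At offset $0$ (positions $0,t,2t,\dots$) the incoming carry is $0$ in every block, by your Step 1. So every quadruple in that class is one of the nine with $x+5y\equiv 3u+3z \pmod 8$.
\item The counting from Theorem \ref{thm02a}, applied to this class of $3n$ positions, forces all of these quadruples to be constant.
\item A constant quadruple has $d=0$, and with incoming carry $0$ this gives outgoing carry $0$. So offset $1$ again has zero incoming carry in every block.
\item Repeating this for $r=1,\dots,t-2$ gives $X=Y=U=Z$.
\end{enumerate}

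This is exactly the paper's induction on $t$. It reads off the congruences at positions $kt$, where the buffers guarantee no incoming carry, and uses Theorem \ref{thm02a} to equalize those digits. It then deletes those digits and applies the hypothesis for $A_{t-1,n}$.
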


\bigskip\noindent\textbf{Proof of Lemma \ref{lem03}.} We prove the lemma by mathematical induction on $t$. For $t=2$,
\[ A_{2,n} \subseteq \{a_{6n-2}0a_{6n-4}0 \dots a_20a_0 \mid a_{2i} \in \{0,1,2\}, 0 \le i \le 3n-1\}. \]
Suppose that $X+5Y=3U+3Z$ and
\begin{align*}
    X &= x_{6n-2}0 \dots 0x_20x_{0\ (8)} \\
    Y &= y_{6n-2}0 \dots 0y_20y_{0\ (8)} \\
    U &= u_{6n-2}0 \dots 0u_20u_{0\ (8)} \\
    Z &= z_{6n-2}0 \dots 0z_20z_{0\ (8)}
\end{align*}
If we add $X+5Y$ and $3U+3Z$ digit by digit in base $8$, we might get a carry of $1$ at the even positions (or no carry at all), but no carry is ever generated at the odd positions. This is because there are zeros at the odd positions, and the previous column carries at most $1$. Thus, if we treat the numbers
\begin{align*}
    \widetilde{X} &= x_{6n-2}x_{6n-4} \dots x_2x_0 \\
    \widetilde{Y} &= y_{6n-2}y_{6n-4} \dots y_2y_0 \\
    \widetilde{U} &= u_{6n-2}u_{6n-4} \dots u_2u_0 \\
    \widetilde{Z} &= z_{6n-2}z_{6n-4} \dots z_2z_0 
\end{align*}
as vectors in $\mathbb{Z}_8^{3n}$, we see that $\widetilde{X}+5\widetilde{Y} = 3\widetilde{U}+3\widetilde{Z}$ holds in $\mathbb{Z}_8^{3n}$. Moreover, since each vector $\widetilde{X}, \widetilde{Y}, \widetilde{U}, \widetilde{Z} \in \mathbb{Z}_8^{3n}$ contains exactly $n$ zeros, $n$ ones, and $n$ twos, so by the proof of Theorem \ref{thm02a}, $\widetilde{X}=\widetilde{Y}=\widetilde{U}=\widetilde{Z}$. But this implies $X=Y=U=Z$, completing the proof of the base case of the induction.

For the inductive step, assume the statement holds for $A_{2,n}, A_{3,n}, \dots, A_{t-1,n}$. We will show it for $A_{t,n}$. Then
\[ A_{t,n} \subseteq \{a_{3tn-2}a_{3tn-3} \dots a_{(3n-1)t}0 \dots 0a_{t-2}a_{t-3}\dots a_1a_0 \mid a_i \in \{0,1,2\}\}. \]
Assume that $X+5Y=3U+3Z$, and
\begin{align*}
    X &= x_{3tn-2}x_{3tn-3} \dots x_{(3n-1)t}0 \dots 0x_{t-2} \dots x_1x_0 \\
    Y &= y_{3tn-2}y_{3tn-3} \dots y_{(3n-1)t}0 \dots 0y_{t-2} \dots y_1y_0 \\
    U &= u_{3tn-2}u_{3tn-3} \dots u_{(3n-1)t}0 \dots 0u_{t-2} \dots u_1u_0 \\
    Z &= z_{3tn-2}z_{3tn-3} \dots z_{(3n-1)t}0 \dots 0z_{t-2} \dots z_1z_0
\end{align*}
Again, we consider the sums $X+5Y$ and $3U+3Z$, but instead of examining whether a carry occurs at every single digit during the addition, we only check for carries at the $(t-1)$-th, $(2t-1)$-th, $\dots$, $((3n-1)t-1)$-th columns. These columns consist entirely of zeros. Since the incoming carry is at most $5$, the sum here never exceeds $5$. Thus, no new carry is generated, meaning the digits in the following columns must match modulo $8$:
\begin{align}
    x_0+5y_0 &\equiv 3u_0+3z_0 \pmod 8 \nonumber \\
    x_t+5y_t &\equiv 3u_t+3z_t \pmod 8 \nonumber \\
    x_{2t}+5y_{2t} &\equiv 3u_{2t}+3z_{2t} \pmod 8 \nonumber \\
    &\vdots \nonumber \\
    x_{(3n-1)t}+5y_{(3n-1)t} &\equiv 3u_{(3n-1)t}+3z_{(3n-1)t} \pmod 8 \label{eq:congruences}
\end{align}
Let us define the vectors $\widetilde{X}, \widetilde{Y}, \widetilde{U}, \widetilde{Z}$ as vectors in $\mathbb{Z}_8^{3n}$ as follows:
\begin{align*}
    \widetilde{X} &= (x_{(3n-1)t}, \dots, x_{2t}, x_t, x_0) \\
    \widetilde{Y} &= (y_{(3n-1)t}, \dots, y_{2t}, y_t, y_0) \\
    \widetilde{U} &= (u_{(3n-1)t}, \dots, u_{2t}, u_t, u_0) \\
    \widetilde{Z} &= (z_{(3n-1)t}, \dots, z_{2t}, z_t, z_0)
\end{align*}
From these congruences, the equation $\widetilde{X}+5\widetilde{Y} = 3\widetilde{U}+3\widetilde{Z}$ holds in $\mathbb{Z}_8^{3n}$. Since each of these vectors has exactly $n$ zeros, $n$ ones, and $n$ twos, Theorem \ref{thm02a} gives us $\widetilde{X}=\widetilde{Y}=\widetilde{U}=\widetilde{Z}$.
That is,
\begin{align}
  x_0&=y_0=u_0=z_0,\notag\\
  x_t&=y_t=u_t=z_t,\notag\\ 
\vdots &\phantom{aaa}\vdots\notag\\
  x_{(3n-1)t}&=y_{(3n-1)t}=u_{(3n-1)t}=z_{(3n-1)t}. \label{eq:equality_stars}
\end{align}

Now look at the original vectors $X, Y, Z, U$. By \eqref{eq:equality_stars}, if we drop the digits at positions $0,t,2t,\dots,(3n-1)t$, for
the resulting base $8$ numbers
\begin{align*}
    \widehat{X} &= x_{3tn-2}x_{3tn-3} \dots x_{(3n-1)t+1}0 \dots 0x_{t-2} \dots x_1 \\
    \widehat{Y} &= y_{3tn-2}y_{3tn-3} \dots y_{(3n-1)t+1}0 \dots 0y_{t-2} \dots y_1 \\
    \widehat{U} &= u_{3tn-2}u_{3tn-3} \dots u_{(3n-1)t+1}0 \dots 0u_{t-2} \dots u_1 \\
    \widehat{Z} &= z_{3tn-2}z_{3tn-3} \dots z_{(3n-1)t+1}0 \dots 0z_{t-2} \dots z_1
\end{align*}
we have $\widehat{X}+5\widehat{Y}=3\widehat{U}+3\widehat{Z}$. Since these truncated numbers belong to $A_{t-1,n}$, our induction hypothesis guarantees $\widehat{X}=\widehat{Y}=\widehat{U}=\widehat{Z}$.
Combining this with \eqref{eq:equality_stars} gives $X=Y=U=Z$, which completes the proof of the lemma. \qed

Next, pick an integer $t$ large enough so that $8^{t\varepsilon} > 3$, and define a positive integer $n$ satisfying
\[
  8^{3tn} \le m < 8^{3t(n+1)}.
\]
As $m \to \infty$, $n$ also grows. Thus, for a sufficiently large $m > m_0(\varepsilon)$, we have
\[ \frac{0.2^{t-1} \cdot 8^{3t\varepsilon}}{3^{3t}} \cdot \frac{\left(\frac{8^{3t\varepsilon}}{27}\right)^n}{n^{t-1}} > 1, \]
because our choice of $t$ makes $\frac{8^{3t\varepsilon}}{27} > 1$. This implies:
\begin{align*}
    \frac{0.2^{t-1} \cdot 8^{3t(n+1)\varepsilon}}{n^{t-1} \cdot 3^{3n} \cdot 3^{3t}} &> 1 \\
    0.2^{t-1} \frac{3^{3nt-3n}}{n^{t-1}} &> \frac{3^{3tn+3t}}{8^{3t(n+1)\varepsilon}} \\
    \left(0.2 \frac{3^{3n}}{n}\right)^{t-1} &> \left(3 \cdot 8^{-\varepsilon}\right)^{3t(n+1)} \\
    \left(0.2 \frac{3^{3n}}{n}\right)^{t-1} &> \left(8^{\log 3/ \log 8 - \varepsilon}\right)^{3t(n+1)} \\
    \left(0.2 \frac{3^{3n}}{n}\right)^{t-1} &> \left(8^{3t(n+1)}\right)^{\log 3/ \log 8 - \varepsilon}
\end{align*}
Since $m < 8^{3t(n+1)}$, we obtain
\begin{equation}
    \left(0.2 \frac{3^{3n}}{n}\right)^{t-1} > m^{\log 3/ \log 8 - \varepsilon}. \label{eq:lower_bound}
\end{equation}

Looking at $A_{t,n}$, we see:
\[ A_{t,n} \subseteq \{0, 1, 2, \dots, 8^{3tn-1} - 1\} \subseteq \left\{0, 1, 2, \dots, \left\lfloor \frac{m}{6} \right\rfloor - 1\right\}. \]
Thus, if $X, Y, U, Z \in A_{t,n}$ satisfy $X+5Y \equiv 3U+3Z \pmod m$, the bounds $0 \le X+5Y < m$ and $0 \le 3U+3Z < m$ force the exact equality:
\[ X+5Y = 3U+3Z. \]
By Lemma \ref{lem03}, this equation holds in $A_{t,n}$ if and only if $X=Y=U=Z$. 

Meanwhile, the size of our set is:
\[ \ab{A_{t,n}} = \left( \frac{(3n)!}{(n!)^3} \right)^{t-1}. \]
Using the bound \eqref{r401} from Theorem \ref{thm02a}, we get:
\[ \ab{A_{t,n}} > \left( 0.2 \frac{3^{3n}}{n} \right)^{t-1}. \]
Applying \eqref{eq:lower_bound} then gives:
\[ \ab{A_{t,n}} > m^{\log 3/ \log 8 - \varepsilon}, \]
which completes the proof of the theorem. \qed

\section{Sketch of the proof of Theorem \ref{thm04}.}
The proof closely follows Theorem \ref{thm03}. The main difference is tweaking the set $A_{t,n}$ to handle the larger carries generated from the coefficients $a, b, c$, and $d$.

Let $r = \lceil \log(a+b)/\log 8 \rceil$. We define $A_{t,n}$ as the set of integers with base-$8$ representations, $a_{3tn-1}a_{3tn-2}\dots a_2 a_1 a_{0 (8)}$, that satisfy:
\begin{itemize}
    \item[-] For each $i \in \{1, 2, \dots, 3n-1\}$, we have a block of $r$ consecutive zeros separating the active digits:
    \[ a_{it-1} = a_{it-2} = \dots = a_{it-r} = 0. \]
    \item[-] All other digits belong to the set $\{0, 1, 2\}$.
    \item[-] For each valid offset $k \notin \{t-1, t-2, \dots, t-r\}$, the corresponding set of digits $\{a_k, a_{t+k}, \dots, a_{(3n-1)t+k}\}$ contains exactly $n$ ones, $n$ twos, and $n$ zeros.
\end{itemize}

When evaluating $aX + bY = cZ + dU$, the sum in a single column can be at most $2a + 2b + C$, where $C$ is the carry from the previous position (since the maximum digit in our set is $2$). The maximum possible carry $C$ must satisfy $C \le \lfloor(2a + 2b + C)/8\rfloor$, which implies $7C \le 2(a+b)$, and thus $C \le \frac{2}{7}(a+b) < a+b$. 

The block of $r$ zeros at positions $it-1, it-2, \dots, it-r$ is specifically chosen because $8^r \ge a+b > C$. At these specific positions, the sum simply evaluates to $0 \cdot a + 0 \cdot b + C = C$. Since the incoming carry $C$ is always strictly less than $8^r$, it can be fully represented in base $8$ using at most $r$ digits. This guarantees that any carry generated by the active digits is completely absorbed within these separating zero digits and does not spill over to the next active block at position $it$.

Furthermore, since $a \equiv 1 \pmod 8$, $b \equiv 5 \pmod 8$, and $c \equiv d \equiv 3 \pmod 8$, the modulo $8$ congruences at the active digit positions simplify right back to $X + 5Y \equiv 3U + 3Z \pmod 8$. Thus, we can reuse the digit-by-digit logic from Lemma \ref{lem03} exactly as before.

The rest of the induction and density estimates follow the exact same steps as in Theorem \ref{thm03}, yielding the same asymptotic lower bound. \qed

\bigskip\noindent\textbf{Declaration on the use of artificial intelligence.}
The computer searches and the generation of the tables presented in Section 4 were performed using the SageMath mathematical software system, with the code executed in the CoCalc cloud-based environment. The authors utilized the Google Gemini large language model (LLM) to assist in drafting certain parts of the manuscript and in writing the SageMath code. %Following the use of this tool, the authors reviewed and edited the content as necessary and take full responsibility for the integrity of the final text and code.

\noindent
Katalin Gyarmati\\
E\"otv\"os Lor\'and University, Institute of Mathematics,\\
H-1117 Budapest, P\'azm\'any P\'eter st. 1/C, Hungary\\
\noindent E-mail address: \texttt{katalin.gyarmati@ttk.elte.hu}

\end{document}